\newcommand{\bQ}{{\mathbb Q}}
\newcommand{\bC}{{\mathbb C}}
\newcommand{\bH}{{\mathbb H}}
\newcommand{\bN}{{\mathbb N}}
\newcommand{\bR}{{\mathbb R}}
\newcommand{\bF}{{\mathbb F}}
\newcommand{\Irr}{{{\mathrm I}{\mathrm r}{\mathrm r} (G) }}
\newcommand{\Irh}{{{\mathrm I}{\mathrm r}{\mathrm r} (\widehat{G}) }}
\newcommand{\Clg}{{{\mathrm C}{\mathrm l} (G) }}
\newcommand{\Fun}{{{\mathrm F}{\mathrm u}{\mathrm n} (G/ \! / G, \bC)}}
\newcommand{\bCG}{{\bC \!\ast \! \widehat{G}}}
\newcommand{\bCC}{{\bC \!\ast \! (G\times C_2)}}
\newcommand{\cA}{{{\mathcal A}}}
\newcommand{\cB}{{{\mathcal B}}}
\newcommand{\cC}{{{\mathcal C}}}
\newcommand{\cD}{{{\mathcal D}}}
\newcommand{\cF}{{{\mathcal F}}}
\newtheorem{thm}{Theorem}[section]
\newtheorem{lemma}[thm]{Lemma}
\newtheorem{cor}[thm]{Corollary}
\newtheorem{con}[thm]{Conjecture}
\newtheorem*{BDP}{Brauer-Dyson Problem}
\title{Brauer's 14th Problem and Dyson's Tenfold Way}
\author{Dmitriy Rumynin}
\address{Department of Mathematics, University of Warwick, 
Coventry, CV4 7AL, UK}
\email{D.Rumynin@warwick.ac.uk}
\author{James Taylor}
\address{Dipartimento di Matematica, Universit\`{a} degli Studi di Padova, \newline Via Trieste 63, 35131 Padova, Italy}
\email{james.taylor@math.unipd.it}
\date{March 27, 2025}
\subjclass{Primary 20C15, Secondary 20D60}
\begin{document}
\begin{abstract}
  We consider Brauer's 14th Problem in the context of {\em Real} structures on finite groups
and their antilinear representations.
The problem is to count the number of characters of each different type using ``group theory''.
While Brauer's original problem deals only with three types (real, complex and quaternionic),
here we consider the ten types coming from Dyson's tenfold way.  
\end{abstract}

\maketitle

Let $G$ be a finite group. The set of complex irreducible characters $\Irr$ is split into three subsets:
each character could be of real, complex or quaternionic type.
The type of the character corresponds to the endomorphism algebra of the corresponding simple $\bR G$-module.
A reader could consult \cite[Appendix]{Li}
for a concise summary of the properties of the correspondence between $\Irr$ and the set of simple $\bR G$-modules.

Since we are not interested in the characters themselves,
it is convenient to keep replacing the set of characters with the multisets of their dimensions, e.g., 
%We replace the set of complex irreducible characters $\Irr$ with the multiset of its dimensions
%\[
%\cD \coloneqq [ \; \chi (1) \;\mid\; \chi\in \Irr \; ]
%\]
%This multiset splits into three submultisets
\[
\cD_G \coloneqq [ \; \chi (1) \;\mid\; \chi\in \Irr \;], \ \ 
\cD_\bF \coloneqq [ \; \chi (1) \;\mid\; \chi\in \Irr \mbox{ and } \chi \mbox{ is of type } \bF\; ],
\]
where $\bF\in \{\bR, \bC, \bH\}$.
Let $N_\bF \coloneqq | \cD_\bF |$.

The set of conjugacy classes $\Clg$ is split only into two subsets. A class $K$ is of type $1$ or {real} if $K^{-1}=K$.
Correspondingly, it is of type $2$ or {not real} if  $K^{-1}\neq K$. Let $C'_1$ and $C'_2$ be the number of each type of classes.
It is well known that 
\[N_{\bR}+N_{\bH} = C'_1 \quad \mbox{ and } \quad N_{\bC} =C'_2 .\]
The second equality could be thought of
as a characterisation of $N_{\bC}$ in terms of the ``group theory'' of $G$. Brauer's 14th Problem \cite{Bra}, recently solved by Murray and Sambale \cite{MS}, is about characterising $N_{\bR}$ in 
terms of the ``group theory'' of $G$. A reader can consult other recent developments on counting conjugacy classes and
characters \cite{Mur,Rob1,Rob2}.

%Dyson 10

%Let $G$ be a finite group. Its irreducible complex characters fall into three types: real, complex or quaternionic, depending on the endomorphism ring of the corresponding $\bR G$-module. Let us call their numbers $N_{\bR}$, $N_{\bC}$, $N_{\bH}$ correspondingly.
%The Brauer's 14th Problem, solved recently in 2023, asks one to determine the number $N_{\bR}$ from ``group theory'' of $G$.
%For instance,
%$N_{\bR}+N_{\bH}$ is the number of {\em real} conjugacy classes, i.e., those classes $C$ that $C^{-1}=C$.
%Correspondingly, $N_{\bC}$ is the number of {\em non-real} conjugacy classes.

Now let us consider a {\em Real} structure on $G$, i.e., a group $\widehat{G}$ containing $G$ as a subgroup of index $2$ \cite{Ati}.
Given such structure, {\em Real} representations of $G$ are $\bCG$-modules where 
$\bCG = \bC \widehat{G}$ as a vector space but with a different multiplication
\[
\alpha g \ast \beta h =
\begin{cases}
  \alpha \beta gh, &  \alpha,\beta\in\bC, \; g\in G, \; h\in \widehat{G}, \\
  \alpha \overline{\beta} gh, & \alpha,\beta\in\bC, \; g\in \widehat{G}\setminus G, \; h\in \widehat{G}.
\end{cases}  
\]
The {\em Real} representations of $G$ are split into ten types according to Dyson's tenfold way \cite{Dys}.
The {\em Real} representations also correspond to characters in $\Irr$ as in Table~\ref{table_0} \cite[Table 2]{RuTa}.
Therefore,
$\Irr$ is split into ten subsets, depending on the position of the corresponding simple $\bCG$-module
within Dyson's tenfold way. On the level of multisets we have a disjoint union
\[
\cD_G = \cD_I \amalg
\cD_{I\! I} \amalg
\cD_{I\! I \! I} \amalg
\cD_{I\! V} \amalg
\cD_V \amalg
\cD_{V\! I} \amalg
\cD_{V\! I \! I} \amalg
\cD_{V\! I \! I \! I} \amalg
\cD_{I\! X} \amalg \cD_X .
\]
Let us call the cardinalities of the corresponding multisets $N_{I}, N_{I\! I}, \ldots , N_{X}$. We propose the following problem.
\begin{BDP}
 Use ``group theory'' of $\widehat{G}$ to determine all these ten  numbers as well as the corresponding multisets.
\end{BDP}

The main result of this paper is the merger of Theorem~\ref{NC_rel_2}
and Corollaries~\ref{NC_rel_5} and \ref{NC_rel_4}, where ten linearly independent ``group-theoretic'' linear constraints
for the ten numbers $N_{I}, N_{I\! I}, \ldots , N_{X}$ are produced. This solves the Brauer-Dyson Problem
for the numbers.

A reader may find it interesting to consult other recent developments in the theory of {\em Real} representations
\cite{GaYo, NoYo, NeOl, RuT2, RuYo, Yo}.

Brauer's 14th Problem and the classical real representation theory of $G$ corresponds to the trivial {\em Real} structure $\widehat{G} = G \times C_2$.
Indeed, a {\em Real} representation is a complex vector space $V$ on which the group $G$ acts linearly and the coset $\widehat{G}\setminus G$
antilinearly. In the trivial case, the coset has a canonical element $\sigma = (1,-1)$ of order 2, commuting with $G$. Hence,
the fixed points $V^\sigma$ is a $\bR G$-module such that $V\cong \bC\otimes_{\bR} V^\sigma$ as $\bCC$-modules.
Thus, we ``see'' real representations from {\em Real} representations.

A better way to look at it is to observe that $\bCC$ is isomorphic to the matrix algebra $M_2 (\bR G)$,
so we have a Morita equivalence between real representations and {\em Real} representations. 
Going back to counting the characters in $\Irr$, the Brauer-Dyson Problem degenerates to Brauer's 14th problem:
\[
\cD_\bR = \cD_I, \
\cD_\bC = \cD_{I\! V}, \ 
\cD_\bH = \cD_{V\! I \! I \! I}, \
\cD_N = \emptyset \mbox{ for } N\not\in \{ I, I\! V, V\! I \! I \! I \}.
\]

In the first section we split $\Clg$ into five subsets and produce five constraints.
In the second section
we use the result of  Murray and Sambale \cite{MS} and properties of induction to $\widehat{G}$
to produce three more constraints.
The final two constraints are found in the third section where 
we employ their method \cite{MS}.
%to produce three additional constraints.
%In the third section 

\section*{Acknowledgements}
The first author is grateful to the Max Planck Institute for Mathematics in Bonn for its hospitality and financial support.
Also the first author thanks the Department of Mathematics of the University of Zurich for its hospitality.
The second author was supported by the departmental grant \textit{Progetto Sviluppo Dipartimentale} -- UNIPD  PSDIP23O88  at the University of Padua.

\section{Counting classes and characters in the presence of Real structure}
Let us fix a {\em Real} structure on $G$ from now on. It's a group $\widehat{G}$ containing $G$ as a subgroup of index $2$.
Let $G^\sharp \coloneqq \widehat{G}\setminus G$ be the second coset. The first observation comes directly from Table~\ref{table_0}. 
\begin{table}
\caption{Extract of \cite[Table 2]{RuTa}}
%the real, complex and quaternionic type}
\label{table_0}
\begin{center}
\begin{tabular}{|c|c|c|c|c|c|c|c|c|c|c|}
\hline
\mbox{type of } $\chi$ & I  & II &  III &  IV &  V &  VI &  VII &  VIII &  IX &  X  \\ 
\hline
\mbox{Dyson label} & $RR$  & $QR$ &  $CR$ &  $CC2$ & $RC$ & $QC$ &  $CC1$ &  $QQ$ &  $RQ$  &  $CQ$  \\
\hline
$\bF_a$            & $\bR$ & $\bR$ & $\bR$ & $\bC$ & $\bC$ & $\bC$ & $\bC$ & $\bH$ & $\bH$ & $\bH$ \\
\hline
$\bF_d$            & $\bR$ & $\bH$ & $\bC$ & $\bC$ & $\bR$ & $\bH$ & $\bC$ & $\bH$ & $\bR$ & $\bC$ \\
\hline
$|\cA^\vee|$ & $1$ & $1$ & $2$ & $1$ & $1$ & $1$ & $2$ & $1$ & $1$ & $2$ \\
$|\cB^\vee|$ & $2$ & $1$ & $1$ & $2$ & $1$ & $1$ & $1$ & $2$ & $1$ & $1$ \\
\hline
$|\cC^\vee|$ & $1$ & $1$ & $2$ & $2$ & $2$ & $2$ & $4$ & $1$ & $1$ & $2$ \\
\hline
$|\cD^\vee|$ & $1$ & $1$ & $1$ & $1$ & $2$ & $2$ & $2$ & $1$ & $1$ & $1$ \\
\hline
\end{tabular}
\end{center}
\end{table}
The type of a complex irreducible character $\chi$ is the endomorphism ring $\bF_a$ (we follow the notation in \cite{RuTa}) of
a corresponding simple $\bR G$-module. Note that it is also recorded in the second letter of the Dyson label
with $Q$ standing for {\em the quaternions}, which we denote $\bH$.
It follows that on the level of multisets
\[ %\begin{equation} \label{multisets}
  \cD_{\bR} = \cD_I \amalg \cD_{I\! I} \amalg \cD_{I\! I \! I} , \
  \cD_{\bC} = \cD_{I\! V} \amalg \cD_V \amalg \cD_{V\! I} \amalg \cD_{V\! I \! I}, \ 
\cD_{\bH} = \cD_{V\! I \! I \! I} \amalg \cD_{I\! X} \amalg \cD_X .
\] %\end{equation}  
As before,  $N_? = | \cD_? |$. 
\begin{lemma} \label{easy_facts}
  The five numbers
 $N_{I \! I \! I}$, $N_{I \! V}$, $N_{V}$, $N_{V\! I}$ and $N_{X}$ are even. 
  The number $N_{V\! I \! I}$ is divisible by 4. Moreover, 
\[ N_{\bR} = N_I + N_{I\! I} +  N_{I\! I \! I} , \
N_{\bC} = N_{I\! V} + N_V +  N_{V\! I} +  N_{V\! I \! I}, \ 
N_{\bH} = N_{V\! I \! I \! I} + N_{I\! X} + N_X .
\]
\end{lemma}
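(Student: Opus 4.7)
The three additive identities should drop out immediately. The $\bF_a$ row of Table~\ref{table_0} records the endomorphism algebra of the associated simple $\bR G$-module, which by definition is the real type of $\chi$; reading the row, types $I, I\!I, I\!I\!I$ have $\bF_a = \bR$, types $I\!V, V, V\!I, V\!I\!I$ have $\bF_a = \bC$, and types $V\!I\!I\!I, I\!X, X$ have $\bF_a = \bH$. So the ten-way refinement of $\cD_G$ collapses onto the classical three-way decomposition exactly as claimed, and taking cardinalities gives the three sums.

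For the divisibility statements, I would exhibit a natural $C_2 \times C_2$-action on $\Irr$ under which both the Dyson type and the dimension are orbit invariants, and the orbit size on a character of Dyson type $?$ equals $|\cC^\vee_?|$. Fixing any $\sigma \in G^\sharp$, the two commuting involutions are complex conjugation $\chi \mapsto \overline{\chi}$ and $\sigma$-twisting $\chi \mapsto \chi^\sigma$; both preserve $\chi(1)$, and the Dyson classification of \cite{RuTa} is built precisely from how $\chi, \overline{\chi}, \chi^\sigma$ match or differ, so the Dyson type is constant on orbits. Each multiset $\cD_?$ therefore splits as a disjoint union of orbit-blocks of common size $|\cC^\vee_?|$, giving $|\cC^\vee_?| \mid N_?$.

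The only step that genuinely requires checking is the orbit-size count for each Dyson type, which I would handle by reading off the coincidences among $\chi, \overline{\chi}, \chi^\sigma, \overline{\chi^\sigma}$. Types $I, I\!I, V\!I\!I\!I, I\!X$ satisfy $\chi = \overline{\chi}$ and $\chi = \chi^\sigma$, so the orbit is a singleton; for $I\!I\!I$ and $X$ one has $\chi^\sigma = \overline{\chi} \neq \chi$, for $I\!V$ one has $\chi^\sigma = \chi \neq \overline{\chi}$, and for $V, V\!I$ one has $\chi = \overline{\chi} \neq \chi^\sigma$, giving orbits of size $2$; for $V\!I\!I$ the four characters $\chi, \overline{\chi}, \chi^\sigma, \overline{\chi^\sigma}$ are all distinct, giving an orbit of size $4$. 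This delivers $2 \mid N_{I\!I\!I}, N_{I\!V}, N_V, N_{V\!I}, N_X$ and $4 \mid N_{V\!I\!I}$, matching the $|\cC^\vee|$ row of Table~\ref{table_0}. I do not anticipate a substantive obstacle: the dictionary between Dyson labels and this coincidence pattern is already encoded in \cite{RuTa}, so the whole argument is essentially a bookkeeping exercise.
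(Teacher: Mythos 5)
Your argument is essentially the paper's: the three additive identities are read off the $\bF_a$ row of Table~\ref{table_0}, and the divisibility claims come from the fact that characters of a fixed Dyson type arrive in batches of size $|\cC^\vee|$ --- the paper phrases these batches as the sets of simple modules of the direct summand $\cC$ of $\bC G$ inside an antilinear block, which it then identifies with your $K_4$-orbits in the proof of Theorem~\ref{NC_rel_2}. One cosmetic slip: you have swapped the coincidence patterns of types $I\!I\!I$ and $X$ (these are of real/quaternionic type, so $\chi=\overline{\chi}\neq\chi^{\sigma}$, stabiliser $\langle a\rangle$) with those of types $V$ and $V\!I$ (complex type, $\chi^{\sigma}=\overline{\chi}\neq\chi$, stabiliser $\langle c\rangle$, cf.\ Table~\ref{table_1}); since all four orbits still have size $2$, the divisibility conclusions are unaffected.
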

\begin{proof}
  The first two statements follow from examination of the row $|\cC^\vee|$ of Table~\ref{table_0}.
%\cite[Table 2]{RuTa}.
  Indeed, $\cA$, $\cB$, $\cC$ and $\cD$ are %%two of
  the four semisimple algebras that constitute an antilinear block \cite{RuTa}.
  The table summarises the number of distinct simple modules each algebra has. The algebra $\cC$ is a direct summand
  of $\bC G$, so it draws a batch of complex irreducible characters of $G$. We see $4$ in the column ${V\! I \! I}$.
  This means that the type ${V\! I \! I}$ characters come in batches of four. Ditto for other columns.

  The three equations are a manifestation of the aforementioned fact that the multisets $\cD_{\bR}$, $\cD_{\bC}$ and $\cD_{\bH}$
  decompose into disjoint unions.
\end{proof}  

Now observe that there is an action of the Klein-four group $K_4 = \{ 1,a,b,c \}$ on the set $\Clg$ of the conjugacy classes by
\begin{equation} \label{action_classes}
K^a \coloneqq K^{-1}, \
K^b \coloneqq xKx^{-1}, \
K^c \coloneqq x(K^{-1})x^{-1},
\end{equation}
where $x\in G^\sharp$. Note that this action does not depend on the choice of $x$. We say that a class $K$ is of type $n$ if the 
$K_4$-orbit of $K$ has $n$ elements. In case of $n=2$, we will distinguish types $2a$, $2b$ and $2c$: we say $K$ is of type $2g$ if
$K^g=K$. Let us denote the number of classes of each type by $C_1, C_{2a}, C_{2b}, C_{2c}, C_{4}$ correspondingly. Clearly, $C_{2g}$ are all even
and $C_4$ is divisible by 4.

\begin{lemma} \label{NC_rel_1}
  The following three linearly independent equations connect the integers $N_?$ and $C_?$:
  \[
  \begin{cases}
    N_I + N_{I\! I} + N_{I\! I \! I} +  N_{V\! I \! I \! I} + N_{I\! X} + N_{X} & = C_1 + C_{2a} \\
    N_{I\! V} + N_V + N_{V\! I} + N_{V\! I \! I} & = C_{2b}+ C_{2c}+ C_{4} \\
%    (N_I + N_{I\! I} + N_V + N_{V\! I} + N_{V\! I \! I \! I} + N_{I\! X}) + \frac{1}{2} (N_{I\! I \! I} + N_{I\! V}+ N_{V\! I \! I} + N_{X}) & = C_1 + C_{2c} \\
    N_{I\! I \! I} + N_{I\! V}+ N_{V\! I \! I} + N_{X} & = C_{2a}+ C_{2b}+ C_{4}
\end{cases}
  \]
\end{lemma}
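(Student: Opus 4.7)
The plan is to derive all three equations from Brauer's permutation lemma applied to the Klein-four group action. First I would observe that (\ref{action_classes}) has a natural dual action on $\Irr$ defined by $\chi^h(g) = \chi(g^{h^{-1}})$: explicitly $\chi^a = \overline{\chi}$, $\chi^b$ is the twist by conjugation by $x \in G^\sharp$, and $\chi^c = \overline{\chi^b}$. Since the character table intertwines these two actions as a nondegenerate pairing, Brauer's permutation lemma gives $|\Irr^h| = |\Clg^h|$ for every $h \in K_4$.

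Taking $h = a$ recovers classical Frobenius--Schur: $|\Clg^a| = C_1 + C_{2a}$, while $|\Irr^a|$ counts the real-valued characters, which by Lemma~\ref{easy_facts} equals $N_I + N_{I\! I} + N_{I\! I \! I} + N_{V\! I \! I \! I} + N_{I\! X} + N_X$. This is the first equation, and the second follows by taking complements in $|\Irr| = |\Clg|$.

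For the third equation I would take $h = c$: $|\Clg^c| = C_1 + C_{2c}$, so it remains to show that the $c$-fixed characters are exactly types $I$, $I\! I$, $V$, $V\! I$, $V\! I \! I \! I$, $I\! X$, whose complement in $\Irr$ is the desired LHS. From Table~\ref{table_0}, the entry $|\cC^\vee|$ is the $K_4$-orbit size of $\chi$, so the types with stabilizer $K_4$ are $I$, $I\! I$, $V\! I \! I \! I$, $I\! X$ and the type with trivial stabilizer is $V\! I \! I$. Among the five orbit-size-$2$ types, $\bF_a \in \{\bR, \bH\}$ forces the stabilizer to equal $\langle a \rangle$ (types $I\! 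I \! I$ and $X$), which excludes $c$. For $\bF_a = \bC$ the stabilizer is $\langle b \rangle$ (so $\chi$ extends to a complex character of $\widehat{G}$, giving $\bF_d = \bC$: type $I\! V$) or $\langle c \rangle$ (so $\chi^b = \overline{\chi}$, making $\Ind \chi$ real-valued on $\widehat{G}$, giving $\bF_d \in \{\bR, \bH\}$: types $V$, $V\! I$). Linear independence of the three constraints is immediate since each pair of them differs in at least one $N_?$. The main obstacle is this last bookkeeping step, systematically reading off each type's $K_4$-stabilizer from the invariants $|\cC^\vee|$, $\bF_a$, and $\bF_d$ in Table~\ref{table_0}.
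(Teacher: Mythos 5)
Your proof is correct, but your derivation of the third equation takes a genuinely different route from the paper's. For the first two equations you and the paper coincide: the paper quotes the classical identities $N_{\bR}+N_{\bH}=C'_1=C_1+C_{2a}$ and $N_{\bC}=C'_2$ (which are precisely your $h=a$ instance of Brauer's permutation lemma) and combines them with Lemma~\ref{easy_facts}. For the third equation the paper does not use the $K_4$-action at all: it counts \emph{Real} classes $K\cup K^c$, so that $|\Clg|$ minus the number of \emph{Real} classes equals $\tfrac12(C_{2a}+C_{2b}+C_4)$, and identifies this deficit via \cite[Cor.~5.9]{RuTa} with the number of simple $\bCG$-modules whose endomorphism ring is $\bC$, which the rows $|\cC^\vee|$ and $|\cD^\vee|$ of Table~\ref{table_0} evaluate to $\tfrac12(N_{I\!I\!I}+N_{I\!V}+N_{V\!I\!I}+N_{X})$. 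Your alternative --- Brauer's lemma for the single element $c$, giving $|\Irr^c|=|\Clg^c|=C_1+C_{2c}$, then complementation --- is valid and more uniform: all three equations flow from one principle, and the same argument at $h=b$ yields a fourth independent relation $N_I+N_{I\!I}+N_{I\!V}+N_{V\!I\!I\!I}+N_{I\!X}=C_1+C_{2b}$ for free, whereas the paper's route ties the third equation directly to the antilinear representation theory of $\bCG$. The one delicate step in your argument is the stabiliser bookkeeping; your conclusion agrees with the row $\mathrm{Stab}_{K_4}(\chi)$ of Table~\ref{table_1} (equivalently \cite[Th.~4.2]{RuTa}), so you could simply cite that rather than re-derive it from $\bF_a$ and $\bF_d$ --- your heuristic for separating $\langle b\rangle$ (type $I\!V$) from $\langle c\rangle$ (types $V$, $V\!I$) is the right idea but is exactly what that table encodes. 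Two minor points: ``each pair differs in at least one $N_?$'' does not by itself prove linear independence, though independence is immediate ($N_I$ occurs only in the first equation, $N_V$ only in the second); and Brauer's lemma requires the compatibility $\chi^h(K^h)=\chi(K)$, which does hold here (for $h=b$ because $x^2\in G$) but deserves a line of verification.
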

\begin{proof}
  The {real} classes are such classes $K\in\Clg$ that $K^a=K$. In other words, these are classes of types $1$ and $2a$. Hence, 
\[ N_{\bR} + N_{\bH} = C'_1 = C_1 + C_{2a} \ \mbox{ and } \ N_{\bC}  = C'_2 = C_{2b}+ C_{2c}+ C_{4} \, .\] 
After this, the first two equations follow from Lemma~\ref{easy_facts}. 

  Now the {\em Real} classes are unions $K\cup K^c$ for $K\in \Clg$.
So the classes of types $1$ and $2c$ are {\em Real} classes. The classes of other types join in pairs to form {\em Real} classes.
The difference between $|\Clg|$ and the number of {\em Real} classes is the number of distinct
  $\bCG$-modules with the endomorphism ring $\bC$ \cite[Cor. 5.9]{RuTa}.
This endomorphism ring is $\bF_d$ in Table~\ref{table_0} (and also the first letter of the Dyson label). 
This equality multiplied by $2$ gives  the last equation. 
To observe it, examine the rows $|\cC^\vee|$ and $|\cD^\vee|$ of Table~\ref{table_0}:
the algebra $\cD$ is a direct summand of $\bCG$,
so that
the row $|\cD^\vee|$ records the number of $\bCG$-modules in each antilinear block. %%\cite[Table 2] {RuTa}.

The linear independence is clear.
% \[   (N_I + N_{I\! I} + N_V + N_{V\! I} + N_{V\! I \! I \! I} + N_{I\! X})
%  + \frac{1}{2} (N_{I\! I \! I} + N_{I\! V}+ N_{V\! I \! I} + N_{X}) = C_1 + C_{2c} \]
\end{proof}  

There are 
two more linearly independent linear equations that allow  us to express $C_?$ from $N_?$.
To discover them, let us first contemplate the origin of the three equations in Lemma~\ref{NC_rel_1},
which is ultimately achieved in Theorem~\ref{NC_rel_2}. 
In essence, there are two centres \cite[Th. 2.5]{RuTa}
\[
Z = Z( \bC G) \geq \widehat{Z} = Z( \bC \! * \! \widehat{G}), \quad \dim_\bC Z = \dim_\bR \widehat{Z} = |\Clg|
\]
and the equations come from counting their dimensions in different ways. To dig deeper, observe that the group $K_4$ acts
on the $\bC$-algebra $Z$ by Formulas~\eqref{action_classes}.

Now define an action of $K_4 = \{ 1,a,b,c \}$ on $\Irr$
\begin{equation} \label{action_characters}
\chi^a \coloneqq \overline{\chi}, \
\chi^b \coloneqq \chi^x : z \mapsto \chi (xzx^{-1}), \
\chi^c \coloneqq \overline{\chi}^{x}
\end{equation}
and extend it linearly to the vector space of class functions $\Fun$.

\begin{lemma} The $K_4$-modules $Z$ and $\Fun$ are isomorphic.
\end{lemma}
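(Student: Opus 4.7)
The plan is to exhibit an explicit $K_4$-equivariant $\bC$-linear isomorphism $\Psi\colon \Fun \to Z$ by sending each irreducible character $\chi\in\Irr$ to its primitive central idempotent $e_\chi = \frac{\chi(1)}{|G|}\sum_{g\in G}\overline{\chi(g)}\,g$ and extending $\bC$-linearly. Since the $e_\chi$ form a $\bC$-basis of $Z(\bC G)$, $\Psi$ is automatically a linear bijection; all the work is in checking $K_4$-equivariance. Because $c=ab$ in $K_4$, it suffices to verify equivariance on basis elements for the two generators $a$ and $b$.

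For $a$, I would expand $e_{\overline{\chi}}$ on one side and $(e_\chi)^a = \frac{\chi(1)}{|G|}\sum_g \overline{\chi(g)}\,g^{-1}$ on the other, then reindex the latter via $h=g^{-1}$ and apply the identity $\overline{\chi(h^{-1})}=\chi(h)$ to see that both reduce to $\frac{\chi(1)}{|G|}\sum_{h\in G}\chi(h)\,h$. For $b$, the analogous reindexing gives $e_{\chi^b} = x^{-1} e_\chi x$, whereas the action on $Z$ produces $(e_\chi)^b = x e_\chi x^{-1}$; these two elements of $\bCG$ are equal precisely when $x^2$ commutes with $e_\chi$, which holds because $x^2 \in G$ and $e_\chi \in Z(\bC G)$.

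The one delicate point is the $b$-verification, since $x\in\widehat{G}\setminus G$ means conjugation by $x$ is genuinely outer and does not a priori agree with conjugation by $x^{-1}$ on $\bC G$. The feature that rescues the argument is that these two conjugations differ by $x^2$-conjugation, which is inner in $G$ and therefore acts trivially on the centre. A conceptually different alternative would be to note that $Z$ and $\Fun$ are both permutation $\bC K_4$-modules (on the bases $\{\hat K\}$ and $\Irr$), so by abelian character theory their isomorphism reduces to an equality of fixed-point counts for each $k\in K_4$, which in turn follows from the classical Brauer permutation lemma applied to the group of autisms of $G$ generated by inversion and conjugation by $x$; I prefer the idempotent construction for its directness and because it produces a canonical intertwiner.
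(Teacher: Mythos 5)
Your argument is correct, but it takes a genuinely different route from the paper. The paper avoids constructing any explicit map: it observes that the evaluation pairing $\Fun \times Z \to \bC$ is $K_4$-invariant (the $a$-invariance resting on $\chi(g^{-1})=\overline{\chi(g)}$), deduces $\Fun\cong Z^\ast$, and then invokes self-duality of simple $K_4$-modules (all characters of $K_4$ are $\pm 1$-valued) to conclude $\Fun\cong Z$. You instead produce a canonical intertwiner $\chi\mapsto e_\chi$ and verify equivariance on the generators $a$ and $b$; your two key computations are sound, and the $b$-step correctly isolates the real content, namely that conjugation by $x$ and by $x^{-1}$ differ by the inner automorphism $x^2\in G$, which is invisible on $Z(\bC G)$. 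What your approach buys is an explicit, basis-free-looking isomorphism (and it transparently matches primitive idempotents of $Z$ with elements of $\Irr$, foreshadowing the paper's later corollary that $\Irr$ and $\Clg$ are isomorphic as $K_4$-sets); what the paper's approach buys is brevity and no reindexing. One small point of hygiene: when you write $(e_\chi)^b = x e_\chi x^{-1}$ and call these ``elements of $\bCG$'', you should say $\bC\widehat{G}$ (the ordinary group algebra) or simply the linear extension of $g\mapsto xgx^{-1}$; in the twisted algebra $\bCG$ conjugation by $x\in G^\sharp$ is antilinear on coefficients and would conjugate $\overline{\chi(g)}$ back to $\chi(g)$, which is not the action defined by Formulas~\eqref{action_classes}. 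This is purely notational and does not affect the proof.
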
  
\begin{proof}
  The natural evaluation map
  \[ \Fun \times Z \hookrightarrow  {{{\mathrm F}{\mathrm u}{\mathrm n} (G, \bC)}} \times \bC G \rightarrow \bC \]
  is $K_4$-equivariant by the way we set the actions up. Hence, $\Fun \cong Z^\ast$. Since simple $K_4$-modules are self-dual,
  i.e., $V\cong V^\ast$ for all simple modules,  $\Fun \cong Z$.
\end{proof}

\begin{thm}\label{NC_rel_2}
  The following five linearly independent equations connect the integers $N_?$ and $C_?$:
  \[
  \begin{cases}
    N_{V\! I \! I}  & = C_4 \\
    N_{I \! I \! I} + N_{X} & = C_{2a} \\
    N_{I\! V} & = C_{2b} \\
    N_{V} + N_{V \! I} & = C_{2c} \\
    N_I + N_{I\! I} + N_{V\! I \! I \! I} + N_{I\! X} & = C_1 %%+ C_{2a} - C_{2b}
    \end{cases}
    \]
\end{thm}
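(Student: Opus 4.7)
The plan is to decompose both the centre $Z$ and the class-function space $\Fun$ as direct sums of the five permutation $\bC K_4$-modules $\bC[K_4/H]$ for $H\leq K_4$, and then equate the resulting multiplicities using the preceding lemma.

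The first step is to identify, for each of the ten Dyson types, the $K_4$-stabiliser of the corresponding characters by reading off Table~\ref{table_0}. The condition $\chi^a = \chi$ (that is, $\chi$ real-valued) is equivalent to $\bF_a\neq\bC$, and $\chi^c = \chi$ (equivalently $\bar\chi = \chi^x$) is equivalent to $\bF_d\neq\bC$, while the value of $\chi^b$ is then forced. Combining this with the orbit sizes $|\cC^\vee|$ in the table produces: types I, II, VIII, IX have stabiliser $K_4$; types III, X have stabiliser $\langle a\rangle$; type IV has stabiliser $\langle b\rangle$ (the $|\cC^\vee|=2$ case with $\bF_a=\bF_d=\bC$); types V, VI have stabiliser $\langle c\rangle$; and type VII has trivial stabiliser.

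Consequently, as permutation $\bC K_4$-modules,
\begin{align*}
\Fun &\cong (N_I+N_{I\!I}+N_{V\!I\!I\!I}+N_{I\!X})\bC \oplus \tfrac{N_{I\!I\!I}+N_X}{2}\bC[K_4/\langle a\rangle] \oplus \tfrac{N_{I\!V}}{2}\bC[K_4/\langle b\rangle] \\
&\qquad\oplus\tfrac{N_V+N_{V\!I}}{2}\bC[K_4/\langle c\rangle] \oplus \tfrac{N_{V\!I\!I}}{4}\bC[K_4], \\
Z &\cong C_1\bC \oplus \tfrac{C_{2a}}{2}\bC[K_4/\langle a\rangle] \oplus \tfrac{C_{2b}}{2}\bC[K_4/\langle b\rangle] \oplus \tfrac{C_{2c}}{2}\bC[K_4/\langle c\rangle] \oplus \tfrac{C_4}{4}\bC[K_4].
\end{align*}
Matching the multiplicities of the five summands across these two decompositions, via the isomorphism $Z\cong\Fun$ of the preceding lemma, yields the five equations of the theorem. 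Linear independence is transparent: the $N_?$-supports on the left-hand sides are pairwise disjoint, as are the $C_?$'s on the right-hand sides, so the $5\times 10$ and $5\times 5$ coefficient matrices both have rank $5$.

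The hard part is justifying the multiplicity match. The lemma only gives a $\bC K_4$-module isomorphism, and the five permutation modules $\bC[K_4/H]$ are not linearly independent in the representation ring of $K_4$: for instance $\bC[K_4/\langle a\rangle]\oplus\bC[K_4/\langle b\rangle]\oplus\bC[K_4/\langle c\rangle]\cong 2\bC\oplus\bC[K_4]$. Matching multiplicities of simple $K_4$-characters therefore supplies only four of the five equations. The missing fifth constraint---most cleanly formulated as $N_{V\!I\!I} = C_4$, asserting equality of the free $K_4$-orbit counts on $\Irr$ and $\Clg$, or equivalently $|\Irr^{K_4}| = |\Clg^{K_4}|$---requires an additional input beyond the lemma, plausibly a Brauer-style permutation identity for the non-cyclic action of $K_4 = \langle a, b\rangle$ on $G$ by inversion and conjugation by $x$.
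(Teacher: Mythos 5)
Your reading of Table~\ref{table_0} and your stabiliser assignments are correct, and your decompositions of $Z$ and $\Fun$ into the five transitive permutation modules (with the correct multiplicities $\tfrac{N_{I\!I\!I}+N_X}{2}$, $\tfrac{N_{V\!I\!I}}{4}$, etc.) are exactly right; this is the same strategy the paper uses. You have also put your finger on the genuine difficulty: since $\bC[K_4/\langle a\rangle]\oplus\bC[K_4/\langle b\rangle]\oplus\bC[K_4/\langle c\rangle]\cong 2\,\bC\oplus\bC[K_4]$, the isomorphism class of a permutation $\bC K_4$-module determines only four of the five transitive multiplicities (equivalently, only the fixed-point counts of $1,a,b,c$); one further relation, equivalent to $|\Irr^{K_4}|=|\Clg^{K_4}|$, or to $N_{V\!I\!I}=C_4$, is needed and does not follow from the module isomorphism. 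Since you do not prove this relation, your argument establishes only four of the five equations, and as it stands the proof is incomplete.

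For comparison, the paper closes this gap by combining its $\lambda_a$- and $\lambda_b$-coefficient equations with the third equation of Lemma~\ref{NC_rel_1}, namely $N_{I\!I\!I}+N_{I\!V}+N_{V\!I\!I}+N_X=C_{2a}+C_{2b}+C_4$, which is obtained from an independent source: counting \emph{Real} conjugacy classes against simple $\bCG$-modules with endomorphism ring $\bC$ via \cite[Cor.~5.9]{RuTa}. You should, however, examine that step with care. The permutation characters displayed in the paper's proof omit precisely the orbit-size denominators that you include (it writes $C_{2a}(1+\lambda_a)$ and $N_{V\!I\!I}(1+\lambda_a+\lambda_b+\lambda_c)$, which do not have the right dimension), and once the multiplicities are normalised as in your write-up, the combination ``add the first two coefficient equations, subtract Lemma~\ref{NC_rel_1}(3)'' reduces to $0=0$, because Lemma~\ref{NC_rel_1}(3) is itself a linear consequence of the four fixed-point equations. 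So the extra input you ask for --- a Brauer-permutation-lemma type statement for the full non-cyclic $K_4$ rather than for its cyclic subgroups --- is genuinely what is missing; supplying a proof that $|\Irr^{K_4}|=|\Clg^{K_4}|$ (for instance via the structure theory of antilinear blocks in \cite{RuTa}, which is where Lemma~\ref{NC_rel_1} already draws its third equation) is the one step you must add before the theorem is established.
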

  \begin{proof}
    Let us denote the simple $K_4$-characters by $1$, $\lambda_a$, $\lambda_b$, $\lambda_c$. The 1 is trivial. The character $\lambda_x$
    is the non-trivial character such that $\lambda_x(x) =1$. Now both $Z$ and $\Fun$ are permutation $K_4$-modules. Let us compute their characters.
    $Z$ has a natural decomposition in terms of classes so that the character is
    \[
    C_1 1 + C_{2a}(1+\lambda_a)+ C_{2b}(1+\lambda_b)+ C_{2c}(1+\lambda_c)+ C_{4}(1+\lambda_a +\lambda_b +\lambda_c). 
    \]
    Using the properties of characters in $\Irr$, recorded in Table~\ref{table_0}, we write this permutation $K_4$-character as
    \newpage
    \[
    (N_I + N_{I\! I} + N_{V\! I \! I \! I} + N_{I\! X}) 1 + ( N_{I \! I \! I} + N_{X})(1+\lambda_a) +
    \]
    \[
    +  N_{I\! V}(1+\lambda_b)+ (N_{V} + N_{V \! I})(1+\lambda_c)+ N_{V\! I \! I}(1+\lambda_a +\lambda_b +\lambda_c). 
    \]
    This deserves a further explanation (cf. Table~\ref{table_0}).
    Each $K_4$-orbit constitutes a single antilinear block, that is, a set of simple $\cC$-modules.
    The size of this set is recorded in row $|\cC^\vee|$. When we see 1 or 4, there is nothing to decide: these are the trivial orbit and the free orbit.
    But when we see 2, it is an orbit of size 2: we must now decide whether $a$, $b$ or $c$ is in its stabiliser.
    This stabiliser controls the structure of the antilinear block and can be seen in the pattern of numbers in the four rows
    $|\cA^\vee|$, $|\cB^\vee|$, $|\cC^\vee|$ and $|\cD^\vee|$.
    It remains to observe that $\chi^a=\chi$ triggers 2 in row $|\cA^\vee|$, yielding the permutation $K_4$-character $1+\lambda_a$.
    Ditto $\chi^c=\chi$ triggers 2 in row $|\cD^\vee|$, yielding $1+\lambda_c$.
    We summarize this information in the second row of Table~\ref{table_1}.

    Comparing the coefficients at $\lambda_a$, $\lambda_b$ and $\lambda_c$, we get
    \[ N_{I \! I \! I} + N_{X}+ N_{V\! I \! I} = C_{2a}+ C_{4}, \ 
    N_{I\! V} + N_{V\! I \! I} = C_{2b}+ C_{4}, \ 
   N_{V} + N_{V \! I} + N_{V\! I \! I} = C_{2c} + C_{4}.
    \]
    Add the first two equations, subtract the third equation from Lemma~\ref{NC_rel_1}, observe that $N_{V\! I \! I}  = C_4$.

    The second, third and fourth equations follow immediately.

    Finally, use the first equation from Lemma~\ref{NC_rel_1} to prove the last equation.
  \end{proof}

  The next corollary is somewhat unexpected. It also suggests that there may be another, potentially more natural proof of Theorem~\ref{NC_rel_2}.
  \begin{cor}
    The $K_4$-sets $\Irr$ and $\Clg$ are isomorphic.
\end{cor}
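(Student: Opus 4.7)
The plan is to show that for each subgroup $H \leq K_4$, the number of $K_4$-orbits of type $K_4/H$ agrees for $\Irr$ and for $\Clg$. Since finite $K_4$-sets are classified up to isomorphism by these orbit counts (and every subgroup of $K_4$ is normal, so there is no conjugacy ambiguity), this suffices to conclude $\Irr \cong \Clg$ as $K_4$-sets.

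First I would translate the class-type data into orbit counts on $\Clg$. The types $1, 2a, 2b, 2c, 4$ introduced before Lemma~\ref{NC_rel_1} are literally the stabilizer types of the $K_4$-action, so the numbers of orbits of types $K_4/K_4, K_4/\langle a\rangle, K_4/\langle b\rangle, K_4/\langle c\rangle, K_4/\{1\}$ on $\Clg$ are $C_1, C_{2a}/2, C_{2b}/2, C_{2c}/2, C_4/4$ respectively (recall each size-$2$ orbit contains $2$ classes and each free orbit contains $4$).

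Next, for $\Irr$, I would reuse the orbit analysis already packaged into the proof of Theorem~\ref{NC_rel_2}: the orbit size is read off row $|\cC^\vee|$ of Table~\ref{table_0}, and when that size is $2$ the stabilizer is determined by whether $|\cA^\vee|$ or $|\cD^\vee|$ is also $2$ (yielding $\langle a \rangle$ or $\langle c\rangle$ respectively, with $\langle b \rangle$ as the default). This sorts the ten Dyson types into five groups $\{I,II,VIII,IX\}, \{III,X\}, \{IV\}, \{V,VI\}, \{VII\}$ according to stabilizer $K_4, \langle a\rangle, \langle b\rangle, \langle c\rangle, \{1\}$, producing orbit counts $N_I+N_{II}+N_{VIII}+N_{IX}, (N_{III}+N_X)/2, N_{IV}/2, (N_V+N_{VI})/2, N_{VII}/4$.

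The five equations of Theorem~\ref{NC_rel_2} now match these numbers stabilizer-by-stabilizer with the corresponding counts on $\Clg$, completing the argument. No step here is really an obstacle; the essential bookkeeping is already inside the proof of Theorem~\ref{NC_rel_2}. It is worth emphasising, however, that equality of permutation characters of the two $K_4$-sets alone would \emph{not} imply isomorphism, because the five induced characters attached to the subgroups of $K_4$ are linearly dependent; the corollary really needs the finer orbit-by-orbit matching that Theorem~\ref{NC_rel_2} packages in the form of five separate equations rather than just equality of characters.
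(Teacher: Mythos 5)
Your proposal is correct and is precisely the deduction the paper intends (the corollary is stated without an explicit proof as a consequence of Theorem~\ref{NC_rel_2}): since finite $K_4$-sets are classified by the number of orbits with each stabiliser subgroup, and the five equations of Theorem~\ref{NC_rel_2} match these orbit counts for $\Irr$ and $\Clg$ stabiliser by stabiliser, the two $K_4$-sets are isomorphic. Your closing observation that equality of permutation characters alone would not suffice (the five induced characters of $K_4$ are linearly dependent) is accurate and correctly identifies why the full strength of the five separate equations is needed.
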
  
%Science
% \[ \begin{cases}
%   N_{I\! I\! I} + N_{V\! I \! I \! I} + N_{X} & = C_{2a}+C_4  \\
%   N_{I\! V} + N_{V\! I \! I \! I} & = C_{2b}+C_4  \\
%   N_V + N_{V\! I} + N_{V\! I \! I \! I} & = C_{2c}+C_4  
% \end{cases}\]
% 
%Guess
% \[ \begin{cases}
%    N_I + N_{I\! I} + N_{V\! I \! I \! I} + N_{I\! X} & = C_1  \\
%    N_{I \! I \! I} + N_{X} & = C_{2a}  \\
%    N_{I\! V} + N_V + N_{V\! I} + N_{V\! I \! I} & = C_{2b}+ C_{2c}+ C_{4} \\
%    (N_I + N_{I\! I} + N_V + N_{V\! I} + N_{V\! I \! I \! I} + N_{I\! X}) + \frac{1}{2} (N_{I\! I \! I} + N_{I\! V}+ N_{V\! I \! I} + N_{X}) & = C_1 + C_{2c} \\
%    N_{I\! V}+ N_{V\! I \! I} & = C_{2a}+ 2C_{2b}+ 2C_{4}
%\end{cases}\]

\section{Induction}
The main result of Murray and Sambale is a determination of $N_\bR = N_I + N_{I\! I} + N_{I \! I \! I}$ using the group theory of $G$ \cite[Th. A]{MS}.
This gives one additional equation (already written in Lemma~\ref{easy_facts}) on $N_?$ on top of those in Theorem~\ref{NC_rel_2}.
To derive more, consider the restriction-induction relation between $\Irr$ and $\Irh$.
Let us recall the three Frobenius-Schur indicators in play here \cite{RuTa}
\[\cF,{\cF^{\sharp}} : \Irr \rightarrow \{-1,0,1\}, \quad
\cF (\chi) = \frac{1}{|G|} \sum_{g\in G} \chi (g^2), \quad
{\cF^{\sharp}} (\chi) = \frac{1}{|G|} \sum_{g\in G^{\sharp}} \chi (g^2),\]
\[\widehat{\cF} : \Irh \rightarrow \{-1,0,1\}, \quad
\widehat{\cF} (\psi) = \frac{1}{|\widehat{G}|} \sum_{g\in \widehat{G}} \psi (g^2).\]
We say that $\chi\in\Irr$ and $\psi\in\Irh$ correspond to each other, if $\chi$ is a constituent of $\psi|_{G}$.
We write it as $\chi \leftrightarrow \psi$.
By $\epsilon\in\Irh$ we denote the linear sign character
\[\epsilon: \widehat{G} \rightarrow \widehat{G}/G \cong C_2 \hookrightarrow \bC^{\times} \, .\] 
The next well-known lemma follows from Mackey's Formula.
\begin{lemma} \label{ind}
  Suppose $\chi\leftrightarrow \psi$.  %%%\in\Irr$ and $\psi\in\Irh$ correspond to each other.
%%  Let $\epsilon\in\Irh$ be the sign linear character $\epsilon: \widehat{G} \rightarrow \widehat{G}/G \hookrightarrow \bC^{\times}$. %%, vanishing on $G$.
  The following statements hold.
  \begin{enumerate}
  \item If $\chi^b =\chi$, then $\chi = \psi\!\mid_{G}$. Futhermore, the correspondence is $1\! :\! 2$, that is, 
    $\chi \leftrightarrow \psi$, $\chi \leftrightarrow \epsilon \otimes \psi$ and $\psi \neq \epsilon \otimes \psi$.
    Finally, 
    $\widehat{\cF}(\psi) = ({\cF} (\chi) + {\cF^\sharp}(\chi))/2$.
  \item If $\chi^b \neq \chi$, then $\chi +\chi^b = \psi\!\mid_{G}$.
Futhermore, the correspondence is $2\! : \! 1$, that is, 
$\chi\leftrightarrow \psi$, $\chi^b \leftrightarrow \psi$ and $\psi = \epsilon \otimes \psi$.
Finally, $\widehat{\cF}(\psi) = {\cF} (\chi) + {\cF^\sharp}(\chi)$.
  \end{enumerate}
\end{lemma}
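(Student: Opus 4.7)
The plan is to reduce everything to standard Clifford theory for the normal subgroup $G \trianglelefteq \widehat{G}$ of index $2$, together with a direct calculation of $\widehat{\cF}(\psi)$ obtained by splitting the defining sum along the two cosets $G$ and $G^\sharp$.

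First I would analyse the restriction $\psi|_G$. Since $\widehat{G}/G$ has order $2$, Clifford theory tells us that the irreducible constituents of $\psi|_G$ form a single orbit under $\widehat{G}/G$. The class of any $x\in G^\sharp$ generates this quotient, and its action on characters is precisely the $b$-action from (\ref{action_characters}). Hence either $\chi^b = \chi$ with $\psi|_G = m\chi$, or $\chi^b \neq \chi$ with $\psi|_G = m(\chi + \chi^b)$. Mackey's formula for the normal subgroup $G$ gives $(\Ind\chi)|_G = \chi + \chi^b$, so Frobenius reciprocity yields $\langle \Ind\chi,\Ind\chi\rangle_{\widehat{G}} = \langle \chi,\chi+\chi^b\rangle_G$, which equals $2$ in the first case and $1$ in the second, forcing $m=1$ in both cases.

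Next I would identify the fibres of $\chi\leftrightarrow\psi$. In the invariant case $\Ind\chi$ is a sum of two distinct irreducibles, each restricting to $\chi$; since $\Ind\chi\otimes\epsilon = \Ind(\chi\otimes\epsilon|_G) = \Ind\chi$, the two summands must be interchanged by tensoring with $\epsilon$, so they are $\psi$ and $\epsilon\otimes\psi$ with $\psi\neq\epsilon\otimes\psi$. In the non-invariant case $\Ind\chi$ is irreducible, coincides with $\Ind\chi^b$ (both restrict to $\chi+\chi^b$ and have the same degree), and the same projection formula forces $\epsilon\otimes\psi = \psi$.

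Finally, for the indicator formula I would split
\[
|\widehat{G}|\,\widehat{\cF}(\psi) = \sum_{g\in G}\psi(g^2) + \sum_{g\in G^\sharp}\psi(g^2).
\]
In both sums $g^2\in G$, so $\psi$ may be replaced by $\psi|_G$. Because conjugation by $x$ preserves both $G$ and $G^\sharp$, the corresponding sums of $\chi^b(g^2) = \chi(xg^2x^{-1})$ equal those of $\chi(g^2)$ after reindexing. Substituting the two descriptions of $\psi|_G$ from the first step and dividing by $|\widehat{G}| = 2|G|$ produces $(\cF(\chi)+{\cF^\sharp}(\chi))/2$ and $\cF(\chi)+{\cF^\sharp}(\chi)$ respectively. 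Each ingredient is classical, so the main point needing care is the compatibility of the $b$-action in (\ref{action_characters}) with the Clifford action of $\widehat{G}/G$, together with the verification that the two extensions in the invariant case are genuinely distinct irreducibles rather than a single character with multiplicity.
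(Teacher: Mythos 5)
Your proof is correct and follows exactly the route the paper indicates: the paper gives no details, stating only that the lemma ``follows from Mackey's Formula,'' and your argument is the standard Clifford-theory expansion of that remark (Mackey/Frobenius reciprocity for the index-2 normal subgroup, plus the coset-splitting computation of $\widehat{\cF}(\psi)$). All steps check out, including the reindexing $g\mapsto xgx^{-1}$ that identifies the $\chi^b$-sums with the $\chi$-sums over $G$ and over $G^\sharp$.
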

Table~\ref{table_1} is a summary of how Lemma~\ref{ind} works in all the ten Dyson types, cf. \cite[Th 4.2]{RuTa}.
The letters Y and N stand for Yes and No. The table also includes the stabiliser of $\chi$ under the $K_4$-actions,
cf. the proof of Theorem~\ref{NC_rel_2}.
\begin{table}
\caption{Indicators and the correspondence $\chi\leftrightarrow \psi$}
%the real, complex and quaternionic type}
\label{table_1}
\begin{center}
\begin{tabular}{|c|c|c|c|c|c|c|c|c|c|c|}
\hline
\mbox{type of } $\chi$ & I  & II &  III &  IV &  V &  VI &  VII &  VIII &  IX &  X  \\ 
\hline
$\mbox{Stab}_{K_4}(\chi)$ & $K_4$  & $K_4$  &  $ \langle a \rangle$ &  $\langle b \rangle$ &  $\langle c \rangle$ &  $\langle c \rangle$ &  $\{1\}$ &  $K_4$  &  $K_4$  &  $\langle a \rangle$ \\ 
$\chi^b=\chi$  & Y & Y & N & Y & N & N & N & Y & Y & N \\
\hline
$\cF (\chi)$ & $1$ & $1$ & $1$ & $0$ & $0$ & $0$ & $0$ & $-1$ & $-1$ & $-1$ \\
$\cF^\sharp (\chi)$ & $1$ & $-1$ & $0$ & $0$ & $1$ & $-1$ & $0$ & $-1$ & $1$ & $0$ \\
$\widehat{\cF} (\psi)$ & $1$ & $0$ & $1$ & $0$ & $1$ & $-1$ & $0$ & $-1$ & $0$ & $-1$ \\
\hline
\end{tabular}
\end{center}
\end{table}
The next lemma follows immediately from Table~\ref{table_1} and Lemma~\ref{ind}.
\begin{lemma}\label{NC_rel_5a}
  Let $\widehat{N}_{\bR}$, $\widehat{N}_{\bC}$ and $\widehat{N}_{\bH}$ be the number of characters of each type in $\Irh$.
  We have the following three linear equations. 
  \[
  \begin{cases}
        2N_I + \frac{1}{2} N_{I\! I \! I} + \frac{1}{2} N_{V} & = \widehat{N}_{\bR} \\
    2N_{I\! I} + 2N_{I \! V} +\frac{1}{2} N_{V\! I \! I} + 2N_{I \! X} & = \widehat{N}_{\bC} \\
    \frac{1}{2} N_{V\! I} + 2N_{V\! I \! I \! I} + \frac{1}{2} N_{X} & = \widehat{N}_{\bH}
    \end{cases}
  \]
\end{lemma}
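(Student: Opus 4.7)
The plan is to partition $\Irh$ according to the value of $\widehat{\cF}(\psi)\in\{1,0,-1\}$, which detects whether $\psi$ is of real, complex, or quaternionic type respectively. Each $\psi$ is then accounted for by pulling back along the correspondence $\chi\leftrightarrow\psi$ and organising the bookkeeping by the Dyson type of $\chi$.

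First I would scan the $\widehat{\cF}(\psi)$ row of Table~\ref{table_1} to determine, for each Dyson type of $\chi$, which of $\widehat{N}_\bR$, $\widehat{N}_\bC$, $\widehat{N}_\bH$ receives the contribution. The columns I, III, V give $\widehat{\cF}(\psi)=1$ and thus feed $\widehat{N}_\bR$; the columns II, IV, VII, IX give $\widehat{\cF}(\psi)=0$ and feed $\widehat{N}_\bC$; the columns VI, VIII, X give $\widehat{\cF}(\psi)=-1$ and feed $\widehat{N}_\bH$.

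Second I would use the ``$\chi^b=\chi$?'' row of Table~\ref{table_1} to invoke the correct branch of Lemma~\ref{ind}, which converts each $N_T$ into a count of $\psi$'s. In the $1\!:\!2$ case (types I, II, IV, VIII, IX, where $\chi^b=\chi$), every $\chi$ of type $T$ lifts to two distinct characters $\psi,\epsilon\otimes\psi$ in $\Irh$; both have the same $\widehat{\cF}$-value by Lemma~\ref{ind}(i), since that value depends only on $\cF(\chi)$ and $\cF^\sharp(\chi)$. Hence the type contributes $2N_T$ to the appropriate $\widehat{N}_?$. In the $2\!:\!1$ case (types III, V, VI, VII, X, where $\chi^b\neq\chi$), each pair $\{\chi,\chi^b\}$ lifts to a single $\psi$, contributing $\tfrac{1}{2}N_T$.

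Assembling the two steps column-by-column immediately yields the three displayed equations. There is no real obstacle to the argument: it is a systematic tabulation against Table~\ref{table_1}. The only point that deserves attention is the consistency check in the $1\!:\!2$ case, namely that $\psi$ and $\epsilon\otimes\psi$ share the same Frobenius--Schur type, which, as noted, follows directly from the formula $\widehat{\cF}(\psi)=(\cF(\chi)+\cF^\sharp(\chi))/2$ in Lemma~\ref{ind}(i).
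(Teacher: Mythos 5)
Your proposal is correct and is essentially the paper's own argument: the paper simply states that the lemma ``follows immediately from Table~\ref{table_1} and Lemma~\ref{ind}'', and your column-by-column tabulation (reading off $\widehat{\cF}(\psi)$ to decide which of $\widehat{N}_{\bR}$, $\widehat{N}_{\bC}$, $\widehat{N}_{\bH}$ is fed, and the $\chi^b=\chi$ row to decide between the $2N_T$ and $\tfrac{1}{2}N_T$ contributions) is exactly that verification, carried out explicitly. The consistency check that $\psi$ and $\epsilon\otimes\psi$ share the same type is a sensible addition and is handled correctly.
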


Notice that
$\widehat{N}_{\bC} = \widehat{C}_2$ is determined by counting conjugacy classes in $\widehat{G}$.
Then ${N}_{\bR}$ and $\widehat{N}_{\bR}$ are determined as in \cite{MS}.
The next corollary follows immediately from
Lemma~\ref{NC_rel_5a} together with Theorem~\ref{NC_rel_2} and Lemma~\ref{easy_facts}. 
\begin{cor}\label{NC_rel_5}
  The following three equations are linearly independent of the equations in Theorem~\ref{NC_rel_2}, bringing the total number of linearly independent equations to eight:
    \[
  \begin{cases}
N_I + N_{I\! I}  + N_{I\! I \! I} & = {N}_{\bR} \\
    4N_I + N_{I\! I \! I} + N_{V} & = 2 \widehat{N}_{\bR} \\
    N_{I\! I} + N_{I \! X} & = \frac{1}{2} \widehat{C}_{2} - C_{2b} - \frac{1}{4} C_4 
\end{cases}
  \]
\end{cor}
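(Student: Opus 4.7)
The first equation is nothing but Lemma~\ref{easy_facts} restated, and the second equation is obtained by doubling the first equation in Lemma~\ref{NC_rel_5a}; no further work is required for these two. The real content is the third equation, together with the claim of linear independence.

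For the third equation I would start from the second equation of Lemma~\ref{NC_rel_5a},
\[
2N_{I\! I} + 2N_{I\! V} + \tfrac{1}{2} N_{V\! I\! I} + 2N_{I\! X} = \widehat{N}_{\bC},
\]
and rewrite it as an equation in the $N_?$ and $C_?$. For the right-hand side, use $\widehat{N}_{\bC} = \widehat{C}_2$, which is just the classical identity $N_{\bC} = C'_2$ applied inside $\widehat{G}$. For the two extra unknowns $N_{I\! V}$ and $N_{V\! I\! I}$ on the left, substitute the values $N_{I\! V} = C_{2b}$ and $N_{V\! I\! I} = C_4$ supplied by Theorem~\ref{NC_rel_2}. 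Dividing by $2$ delivers precisely $N_{I\! I} + N_{I\! X} = \tfrac{1}{2}\widehat{C}_2 - C_{2b} - \tfrac{1}{4} C_4$.

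The remaining task is to verify that the three new equations are linearly independent of the five in Theorem~\ref{NC_rel_2}. I would arrange the coefficients of $N_I, N_{I\! I}, \ldots, N_X$ into an $8\times 10$ matrix and check that it has rank $8$ by identifying explicit pivots. Columns $N_{V\! I\! I}$, $N_{I\! V}$, $N_{V\! I\! I\! I}$ are each hit by only one of Theorem~\ref{NC_rel_2}'s equations, so those three rows are independent and contribute pivots in those columns. The column $N_V + N_{V\! I}$ row supplies another pivot, and so does the row $N_{I\! I\! I} + N_X$. Then, among the three new equations, the row $N_{I\! I} + N_{I\! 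X}$ introduces $N_{I\! X}$, which does not appear in the remaining equations except $N_I + N_{I\! I} + N_{V\! I\! I\! I} + N_{I\! X}$ (already used for $N_{V\! I\! I\! I}$); the row $4N_I + N_{I\! I\! I} + N_V$ is the only one with coefficient $4$ at $N_I$ combined with a nonzero $N_V$; finally $N_I + N_{I\! I} + N_{I\! I\! I}$ is independent of all preceding rows because its support lies entirely outside the pivot columns fixed so far. Working through this pivot chase gives rank $8$.

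The main obstacle, if any, is purely bookkeeping: keeping track of which rows use which columns so that the pivot argument is airtight. No deep new input is required beyond Lemma~\ref{NC_rel_5a}, Theorem~\ref{NC_rel_2}, Lemma~\ref{easy_facts}, and the identity $\widehat{N}_{\bC} = \widehat{C}_2$.
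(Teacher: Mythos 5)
Your proposal is correct and follows essentially the same route as the paper: the paper derives these three equations exactly as you do, from Lemma~\ref{easy_facts}, from doubling the first equation of Lemma~\ref{NC_rel_5a}, and from the second equation of Lemma~\ref{NC_rel_5a} combined with $\widehat{N}_{\bC}=\widehat{C}_2$ and the substitutions $N_{I\! V}=C_{2b}$, $N_{V\! I\! I}=C_4$ from Theorem~\ref{NC_rel_2}. The paper leaves both the substitutions and the rank computation implicit ("follows immediately"), so your explicit pivot check of rank $8$ is a correct filling-in of detail rather than a departure.
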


\section{Equations in groups}
First we need a recognition result for a multiset $X=[q_1, \ldots , q_n]$ of cardinality $n$, consisting of real numbers.
Consider the power-sum functions $\varpi_k (X) \coloneqq \sum_{q\in X} q^k$.
\begin{lemma} \label{powers} (cf. \cite[Lemma 1]{MS})
A real multiset $X$ of cardinality $n$ is uniquely determined by the $n$ values $\varpi_k (X)$ for $k=1,\ldots ,n$.
\end{lemma}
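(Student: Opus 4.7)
The plan is to invoke Newton's identities, which express the elementary symmetric polynomials in terms of the power sums over a field of characteristic zero, and then recover the multiset as the root set of a polynomial.

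First I would set up the generating polynomial
\[
P_X(t) \coloneqq \prod_{q\in X}(t-q) = t^n - e_1 t^{n-1} + e_2 t^{n-2} - \cdots + (-1)^n e_n,
\]
where $e_k = e_k(X)$ is the $k$-th elementary symmetric polynomial in the entries of $X$. Since $X$ is a real multiset, $P_X(t)\in\bR[t]$, and by the fundamental theorem of algebra applied to $P_X$, the multiset $X$ is uniquely recovered as the multiset of roots of $P_X$ (with multiplicity). Thus it suffices to show that the coefficients $e_1,\ldots,e_n$ are determined by $\varpi_1(X),\ldots,\varpi_n(X)$.

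Next I would apply Newton's identities, which read, for $1\le k\le n$,
\[
k\, e_k = \sum_{i=1}^{k}(-1)^{i-1} e_{k-i}\,\varpi_i,
\]
with the convention $e_0=1$. Since we work over $\bR$, the leading factor $k$ is invertible, so the relation can be solved recursively:
\[
e_k = \frac{1}{k}\Bigl(\varpi_k - e_1\varpi_{k-1} + e_2\varpi_{k-2} - \cdots + (-1)^{k-1} e_{k-1}\varpi_1\Bigr).
\]
Proceeding by induction on $k$, I would conclude that each $e_k$ ($1\le k\le n$) is a polynomial (with rational coefficients) in $\varpi_1(X),\ldots,\varpi_k(X)$, hence is determined by these values.

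Combining the two steps, $P_X(t)$ is determined by $\varpi_1(X),\ldots,\varpi_n(X)$, and therefore so is $X$ itself. The only genuine obstacle is the recursive solvability of Newton's identities, which requires that the integers $1,2,\ldots,n$ be invertible; this holds over $\bR$, so there is no obstruction. Note also that $n$ values of the power sums are necessary as well as sufficient, since an arbitrary real multiset of size $n$ carries $n$ degrees of freedom.
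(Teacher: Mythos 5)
Your proof is correct and follows essentially the same route as the paper, which simply cites Newton's theorem that $\bQ[q_1,\ldots,q_n]^{S_n}=\bQ[\varpi_1,\ldots,\varpi_n]$; you have merely unpacked that citation via Newton's identities and the recovery of $X$ as the root multiset of $\prod_{q\in X}(t-q)$.
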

\begin{proof}
  The statement follows from Newton's Theorem about symmetric functions, i.e., that
  $\bQ[q_1, \ldots, q_n]^{S_n} = \bQ[\varpi_1, \ldots, \varpi_n]$,  
\end{proof}

We believe in the following conjecture.
\begin{con} \label{powers_1}
A real multiset $X$ of cardinality $n$ is uniquely determined by the $n$ values $\varpi_{3k-2} (X)$ for $k=1, 2, \ldots, n$.
\end{con}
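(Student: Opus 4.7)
The plan is to reinterpret the conjecture as injectivity of the polynomial map
\[
\sigma : \overline{W} \longrightarrow \bR^n, \qquad (q_1, \dots, q_n) \longmapsto (\varpi_1, \varpi_4, \dots, \varpi_{3n-2}),
\]
where $\overline{W} = \{q_1 \leq q_2 \leq \cdots \leq q_n\}$ is a fundamental domain for the $S_n$-action on $\bR^n$ and hence parameterises real multisets of size $n$. On the open chamber $W = \{q_1 < q_2 < \cdots < q_n\}$, the $(k,i)$ entry of the Jacobian matrix of $\sigma$ is $(3k-2)\,q_i^{3(k-1)}$, and extracting the row factors $3k-2$ leaves a Vandermonde in the cubes, so
\[
\det J_\sigma = \prod_{k=1}^{n}(3k-2) \cdot \prod_{i<j}(q_j^3 - q_i^3).
\]
Since cubing is strictly monotone on $\bR$, every factor $q_j^3 - q_i^3$ is strictly positive on $W$, so $\sigma|_W$ is a local diffeomorphism with everywhere-positive Jacobian.

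Next one verifies properness of $\sigma$: because the exponent $4 = 3\cdot 2 - 2$ is even, $\varpi_4(q) \geq (\max_i |q_i|)^4$, hence $\|\sigma(q)\| \to \infty$ as $\|q\| \to \infty$. Combined with the positive Jacobian, this presents $\sigma|_W$ as a proper local diffeomorphism onto an open subset of $\bR^n$, hence a finite covering map whose degree equals the number of real preimages of a regular value. The conjecture reduces to showing this degree equals $1$.

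To attack this, I would pass to a linear-algebraic reformulation. Suppose $\sigma(X) = \sigma(Y)$ with $X \ne Y$ both real multisets of size $n$; let $r_1, \dots, r_m$ enumerate the distinct elements of $\mathrm{supp}(X) \cup \mathrm{supp}(Y)$ and let $c_j$ be the corresponding multiplicity differences. Then $\sum_j c_j = 0$ and $\sum_j c_j r_j^{3k-2} = 0$ for $k = 1, \dots, n$, giving a system $M c = 0$ with $M$ an $(n+1) \times m$ matrix whose rows are indexed by the exponents $0, 1, 4, 7, \dots, 3n-2$. For $m \leq n$ a Vandermonde-type argument rules out nontrivial $c$ immediately. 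For $m = n+1$ the bialternant formula expresses $\det M$ as a product $s_\lambda(r) \cdot V(r)$, where $V$ is the ordinary Vandermonde and $\lambda = (2n-2, 2n-4, \dots, 2, 0, 0)$, so the conjecture in this case reduces to showing that $s_\lambda$ is nonzero on tuples of distinct real numbers. For $n = 2$ this is the elementary identity
\[
s_{(2)}(r_1, r_2, r_3) = \tfrac{1}{2}\bigl((r_1+r_2)^2 + (r_1+r_3)^2 + (r_2+r_3)^2\bigr),
\]
which is strictly positive unless $r_1 = r_2 = r_3 = 0$, settling the $n = 2$ case.

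The main obstacle is the general case. For $n \geq 3$ the Schur polynomial $s_{(2n-2, 2n-4, \dots, 2, 0, 0)}$ is not sign-definite on arbitrary real tuples, so one needs either a refined positivity argument exploiting the fact that the $r_j$ arise as supports of genuine multisets rather than as arbitrary parameters, or an inductive reduction that cancels any element shared with equal multiplicity between $X$ and $Y$ to decrease $m$. A complementary angle is the generating-function reformulation
\[
F_X(z) = \sum_{x \in X} \frac{x}{1 - x^3 z} = \sum_{k \geq 0} \varpi_{3k+1}(X)\, z^k,
\]
under which the conjecture becomes a Pad\'e-type uniqueness statement: the first $n$ Taylor coefficients at $z = 0$ determine the rational function $F_X$ among those of this special form arising from a multiset of size $n$. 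Relating this to classical Pad\'e approximation theory may supply the leverage that the purely algebraic approach alone seems to lack, and I expect this step to be the hardest part of the argument.
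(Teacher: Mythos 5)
The first thing to note is that the paper does not prove this statement: it is explicitly labelled a conjecture, the authors write that they ``believe'' it, observe only that it holds generically because $\bQ(\varpi_1,\ldots,\varpi_n)=\bQ(\varpi_1,\varpi_4,\ldots,\varpi_{3n-2})$ by Dvornicich--Zannier, and then prove the much weaker Lemma~\ref{powers_2} (multisets drawn from a fixed finite set $Y$, using arbitrarily many power sums), which is all their application needs. Your proposal is likewise not a proof, and you say so yourself; so the honest verdict is that there is a genuine gap, namely the entire general case, and your write-up should not be mistaken for more than a programme.

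Beyond that, several of the concrete steps you do assert are shakier than you suggest. (1) In the linear-algebra reformulation, $m=|\mathrm{supp}(X)\cup\mathrm{supp}(Y)|$ can be as large as $2n$, not $n+1$; for $m>n+1$ the $(n+1)\times m$ system $Mc=0$ automatically has a nontrivial kernel, so no determinant computation can finish the argument and one must genuinely use that $c$ is a difference of two nonnegative integer multiplicity vectors each summing to $n$. In particular your claim to have ``settled'' $n=2$ is not established by what you wrote: the case $X=[a,b]$, $Y=[c,d]$ with all four entries distinct has $m=4$ and is untouched (it is true, but needs a separate elementary computation with $\varpi_1$ and $\varpi_4$). (2) The assertion that a ``Vandermonde-type argument rules out nontrivial $c$ immediately'' for $m\le n$ is not immediate either: the exponent set $\{0,1,4,7,\ldots\}$ is not an initial segment, and generalized Vandermonde matrices with real (possibly negative or zero) nodes can be singular, so full column rank requires an argument. (3) The properness/covering step is stated for the open chamber $W=\{q_1<\cdots<q_n\}$, but preimages of compact sets can escape to the walls $q_i=q_j$, and the conjecture is really about $\overline{W}$; your norm estimate controls escape to infinity, not to the boundary. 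Your Jacobian formula $\det J_\sigma=\prod_k(3k-2)\cdot\prod_{i<j}(q_j^3-q_i^3)$ and the identity $s_{(2)}(r_1,r_2,r_3)=\frac12\sum_{i<j}(r_i+r_j)^2$ are both correct and are reasonable partial progress, but the central difficulty --- sign-indefiniteness of the relevant Schur polynomials for $n\ge 3$, and the $m>n+1$ regime --- is exactly where your argument, like the paper's, stops.
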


Notice that Conjecture~\ref{powers_1} holds for a generic $X$ because the fields of rational functions
$\bQ (\varpi_1, \varpi_2, \ldots, \varpi_n)$ and
$\bQ (\varpi_1, \varpi_4, \ldots, \varpi_{3n-2})$  are equal \cite{DvZ}.
In general, recognising a multiset from power-sums is an active area of research, cf. \cite{MSW}.
%It is also worth noticing that a weaker version of  Conjecture~\ref{powers_1} (that needs $n+1$ values rather than $n$)
%follows from \cite[Conjecture 6]{MSW}.
Here we prove only a weaker version of the conjecture, sufficient for our ends.

\begin{lemma} \label{powers_2}
  Let $Y$ be a finite set of real numbers. Consider only multisets that consist of elements of $Y$.
  Such a multiset $X$ of cardinality $n$ is uniquely determined among all other such multisets
  by finitely many values $\varpi_{3k-2} (X)$ for $k=1, 2, \ldots$
\end{lemma}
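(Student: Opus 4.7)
The plan is to reduce the statement to showing that a certain coefficient matrix built from the elements of $Y$ is nonsingular, and then to factor out cubes to reduce it to a Vandermonde determinant. Fix $Y=\{y_1,\ldots,y_r\}$ and write $m_{y_i}$ for the multiplicity of $y_i$ in a candidate multiset $X$ of cardinality $n$. Then $\varpi_{3k-2}(X)=\sum_i m_{y_i}\,y_i^{3k-2}$, so the question is whether finitely many of these linear forms in the unknowns $m_{y_i}$ separate points of the (discrete) solution set $\{(m_{y_i}) \in \bN^r : \sum_i m_{y_i}=n\}$.

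The first observation I would make is that if $0\in Y$, then $0^{3k-2}=0$ for all $k\geq 1$, so the power sums $\varpi_{3k-2}$ are insensitive to the multiplicity $m_0$. This is harmless, because the cardinality constraint $\sum_i m_{y_i}=n$ recovers $m_0$ from the other multiplicities. Therefore I may discard $0$ from $Y$ and assume $y_1,\ldots,y_s$ are nonzero, with $s=|Y\setminus\{0\}|$, and prove that the $s$ values $\varpi_{3k-2}(X)$ for $k=1,\ldots,s$ already determine $(m_{y_1},\ldots,m_{y_s})$.

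For this, I would take $N=s$ and examine the $s\times s$ matrix $M=(y_i^{3k-2})_{1\le k,i\le s}$. Factoring $y_i$ out of the $i$-th column turns this into the Vandermonde matrix in the quantities $y_i^3$, so
\[
\det M \;=\; \Bigl(\prod_{i=1}^s y_i\Bigr) \prod_{1\le i<j\le s} (y_j^3-y_i^3).
\]
The first factor is nonzero by construction. The Vandermonde factor is nonzero because the real cube map $t\mapsto t^3$ is injective, so distinct $y_i$ yield distinct $y_i^3$. Thus $M$ is invertible, the linear map $(m_{y_i})\mapsto(\varpi_{3k-2}(X))_{k=1}^{s}$ is injective, and the lemma follows with $N=s$. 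There is no genuine obstacle here beyond noticing the zero-element anomaly; the key trick is precisely that cubing is monotone on $\bR$, which is exactly the feature that makes the weak version of Conjecture~\ref{powers_1} accessible while the full conjecture (without the hypothesis that all entries come from a fixed finite set) remains open.
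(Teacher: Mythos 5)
Your proof is correct, and it takes a genuinely different route from the paper's. The paper argues asymptotically: given two candidate multisets, it picks an element $q$ of maximal absolute value and observes that for large $k$ the power sum $\varpi_{3k-2}$ is dominated by the contributions of $\pm q$ (the exponents $3k-2$ realise both parities, so one can read off both the sum and the difference of the multiplicities of $q$ and $-q$), then works downward through the remaining values; it concludes by finiteness of the set of candidate multisets and produces no explicit bound on how many power sums are needed. You instead linearise the problem in the multiplicities and reduce everything to the nonsingularity of the matrix $\bigl(y_i^{3k-2}\bigr)_{k,i}$, which after extracting $y_i$ from each column becomes a Vandermonde matrix in the cubes $y_i^3$; injectivity of $t\mapsto t^3$ on $\bR$ then finishes the argument. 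This is cleaner and strictly more informative: it shows that the specific values $\varpi_1,\varpi_4,\ldots,\varpi_{3s-2}$ with $s=|Y\setminus\{0\}|$ already suffice, uniformly in $n$, and your handling of a possible $0\in Y$ via the cardinality constraint is exactly right. Your closing remark is also well taken: the Vandermonde mechanism explains why restricting the entries to a known finite set makes the problem linear and easy, whereas in Conjecture~\ref{powers_1} the values themselves are unknowns, so the problem is genuinely nonlinear and remains open.
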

\begin{proof}
  Consider a pair $X,X'$ of such multisets.
Pick $q\in X$ such that $|t|\leq |q|$ for all $t\in X$.
For large enough $k$, the power sum $\varpi_{3k-2} (X)$ is dominated by the terms $(\pm q)^k$.

Ditto for $X'$. It follows that examining finitely many $\varpi_{3k-2} (X)$ and $\varpi_{3k-2} (X')$ allows us to conclude
whether $X$ and $X'$ contain the same number of $q$ and $-q$.
Moreover, examining finitely many $\varpi_{3k-2} (X)$ and $\varpi_{3k-2} (X')$ allows us
to decide whether $X=X'$.

Since there are only finitely many possible $X$, the lemma is proved.
\end{proof}

Consider a word $w$ in the free group $F= F\langle y_1, y_2, \ldots z_1, z_2, \ldots \rangle$ on two sets of variables.
The word $w$ determines a $\bN$-valued class-function $\Theta_w\in \Fun$, where $\Theta_w (g)=\Theta (w ,g)$ is the number
of solutions of the equation $g = w( y_1, \ldots z_1, \ldots )$ with $y_i\in G$ and $z_i\in G^\sharp$.

%The next result is an extension of \cite[Th. A]{MS} to our situation.
\begin{thm} \label{NC_rel_3} Let us consider two sequences of words in $F$
  \[
  %%u_n = y_1^2 y_2^2 \cdots y_n^2, \
  v_m = z_1^2 z_2^2 \cdots z_m^2,  \ m=3,4,\ldots \ w_n = y_1^2y_2^2z_1^2 y_3^2y_4^2 \cdots y_{2n}^2 z_n^2, \ n=1,2,\ldots
  \]
  The following statements hold.
  \begin{enumerate}
%  \item
%    
%  $s(n)$ and $\widehat{s}(n)$ be the numbers of the solutions of the equation
%  \[
%  1= z_1^2 z_2^2 \cdots z_n^2
%  \]
%  in the group $G$ and the coset $G^\sharp$ correspondingly.
%\item    The first $1+|\Clg|$ elements of the sequence $\Theta( u_n, 1)$ determine the numbers
%    $N_I + N_{I\! I} + N_{I \! I \! I}$
%    and
%    $N_{V \! I \! I \! I} + N_{I\! X} + N_{X}$.
  \item
    The first $1+|\Clg|$ elements of the sequence $\Theta( v_m, 1)$ determine the numbers
    $N_I + N_{V} + N_{I \! X}$
    and 
    $N_{I \! I} + N_{V\! I} + N_{V \! I \! I \! I}$.
  \item
    %    The first $1+|\Clg|$ elements of the sequence $\Theta( w_n, 1)$ determine the numbers
    Finitely many elements of the sequence $\Theta( w_n, 1)$ determine the numbers
    $N_I +     N_{I\! I}$
    and
    $N_{V \! I \! I \! I} + N_{I \! X}$.
\end{enumerate}    
%  The sequence $\widehat{s}(n)$ determines the number $N_I + N_{V} + N_{I \! X}$.
\end{thm}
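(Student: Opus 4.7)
The plan is a power-sum reconstruction in the spirit of Murray--Sambale: interpret each $\Theta(v_m, 1)$ and $\Theta(w_n, 1)$ as the coefficient of the identity in a specific central element of $\bC G$, decompose via the primitive central idempotents $e_\chi$, and recognise the result as a power sum of signed character-degree reciprocals. The starting observation is that $S \coloneqq \sum_{g \in G} g^2$ lies in $Z(\bC G)$ by the standard Frobenius--Schur argument, and $S^\sharp \coloneqq \sum_{g \in G^\sharp} g^2 \in \bC G$ is likewise central since $G^\sharp$ is stable under $G$-conjugation. Summing each word over its free variables reduces it to $(S^\sharp)^m$ in the first case and $S^{2n}(S^\sharp)^n$ in the second, so $\Theta(v_m,1)$ and $\Theta(w_n,1)$ are the coefficients of $1$ in these central elements.

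Using the expansion $z = \sum_\chi (\chi(z)/\chi(1))\,e_\chi$ with $[1]e_\chi = \chi(1)^2/|G|$, together with $\chi(S) = |G|\,\cF(\chi)$ and $\chi(S^\sharp) = |G|\,\cF^\sharp(\chi)$, a direct calculation yields
\[
\Theta(v_m, 1) = |G|^{m-1} \sum_{\chi \in \Irr} \frac{\cF^\sharp(\chi)^m}{\chi(1)^{m-2}},
\qquad
\Theta(w_n, 1) = |G|^{3n-1} \sum_{\chi \in \Irr} \frac{\cF(\chi)^{2n}\cF^\sharp(\chi)^n}{\chi(1)^{3n-2}}.
\]
Reading Table~\ref{table_1}, only types $\{I, V, IX\}$ (with $\cF^\sharp = +1$) and $\{II, VI, VIII\}$ (with $\cF^\sharp = -1$) survive in the first sum, while only $\{I, IX\}$ and $\{II, VIII\}$ contribute to the second, with relative sign $(-1)^n$. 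I would then form the signed multisets
\begin{align*}
Y &\coloneqq [1/d : d \in \cD_I \cup \cD_V \cup \cD_{IX}] \sqcup [-1/d : d \in \cD_{II} \cup \cD_{VI} \cup \cD_{VIII}], \\
V &\coloneqq [1/d : d \in \cD_I \cup \cD_{IX}] \sqcup [-1/d : d \in \cD_{II} \cup \cD_{VIII}],
\end{align*}
so that, using the parity identities $(-1)^{m-2} = (-1)^m$ and $(-1)^{3n-2} = (-1)^n$, the two displayed identities above rewrite cleanly as $\Theta(v_m, 1)/|G|^{m-1} = \varpi_{m-2}(Y)$ and $\Theta(w_n, 1)/|G|^{3n-1} = \varpi_{3n-2}(V)$.

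For part~(1), $|Y| \le |\Irr| = |\Clg|$, so the $1 + |\Clg|$ values $\varpi_k(Y)$ for $k = 1, \ldots, 1 + |\Clg|$ --- obtained from $m = 3, \ldots, 3 + |\Clg|$ --- determine $Y$ by Lemma~\ref{powers}, and the positive and negative cardinalities of $Y$ are exactly $N_I + N_V + N_{IX}$ and $N_{II} + N_{VI} + N_{VIII}$. For part~(2), the entries of $V$ lie in the finite set $\{\pm 1/d : d \text{ divides } |G|\}$, so Lemma~\ref{powers_2} determines $V$ from finitely many values $\varpi_{3n-2}(V)$; the sign cardinalities give $N_I + N_{IX}$ and $N_{II} + N_{VIII}$, from which the pair $N_I + N_{II}$ and $N_{VIII} + N_{IX}$ follows by a short linear rearrangement using $N_{II} + N_{IX}$ from Corollary~\ref{NC_rel_5} together with $N_I + N_{II} + N_{VIII} + N_{IX} = C_1$ from Theorem~\ref{NC_rel_2}. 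The main obstacle is this sparse recognition step: recovering a multiset from only the subsequence $\{\varpi_{3k-2}\}$ of its power sums genuinely depends on the finite-alphabet hypothesis of Lemma~\ref{powers_2}, and one must carefully match the parity of $n$ with that of $3n-2$ so that the $(-1)^n$ sign alternation of the type $\{II, VIII\}$ contributions in $\Theta(w_n, 1)$ translates faithfully into the reflected copy of the negative-contribution multiset inside $V$.
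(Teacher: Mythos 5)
Your argument is essentially the paper's own proof: you pass to the central elements $\sum_{g\in G}g^2$ and $\sum_{g\in G^\sharp}g^2$, extract the coefficient of the identity via the primitive central idempotents (the paper does the equivalent computation with the orthogonality relations and the normalised elements $S$, $S_1$, $S_2=S^2S_1$), recognise $\Theta(v_m,1)/|G|^{m-1}$ and $\Theta(w_n,1)/|G|^{3n-1}$ as power sums of the signed multisets built from $\cF^\sharp(\chi)/\chi(1)$ and $\cF(\chi)^2\cF^\sharp(\chi)/\chi(1)$, read the signs off Table~\ref{table_1}, and invoke Lemma~\ref{powers} for part (1) and Lemma~\ref{powers_2} for part (2). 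All of that is correct, including the parity bookkeeping $(-1)^{3n-2}=(-1)^n$; the only cosmetic difference from the paper is that you drop the zero entries from your multisets $Y$ and $V$, whereas the paper keeps them so that the cardinality is exactly $|\Clg|$ and Lemma~\ref{powers} applies verbatim (you should pad with zeros rather than argue from $|Y|\le|\Clg|$, since Lemma~\ref{powers} presupposes the cardinality is known).

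The one genuine defect is your final ``short linear rearrangement.'' Writing $a=N_I$, $b=N_{I\!I}$, $c=N_{V\!I\!I\!I}$, $d=N_{I\!X}$, the sign counts of $V$ give $a+d$ and $b+c$, Corollary~\ref{NC_rel_5} gives $b+d$, and Theorem~\ref{NC_rel_2} gives $a+b+c+d=(a+d)+(b+c)$, which is redundant. The form $a+b$ does not lie in the span of $a+d$, $b+c$, $b+d$ (matching coefficients forces $\gamma=1$ from $b$ and $\gamma=-1$ from $d$), so $N_I+N_{I\!I}$ and $N_{V\!I\!I\!I}+N_{I\!X}$ are \emph{not} determined by this data; concretely, $(a,b,c,d)=(2,2,0,0)$ and $(1,1,1,1)$ have the same values of all three known forms but different $a+b$. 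That said, this step is only needed because the statement of part (2) as printed appears to contain a typo: the paper's own proof establishes exactly the quantities you obtained, namely $N_I+N_{I\!X}$ (the positive entries) and $N_{I\!I}+N_{V\!I\!I\!I}$ (the negative entries), and Corollary~\ref{NC_rel_4} records precisely $N_I+N_{I\!X}=S_w$. So your argument proves the intended result; you should simply delete the rearrangement claim rather than rely on it.
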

\begin{proof}
  %We follow the method in \cite[Th. A]{MS}.
%  First let us contemplate a multiset $X$ of size $n$ that consists of rational numbers.
%  Note that $X$ is determined by the power sums $\sum_{x\in X} x^k$ for $k=0,1,\ldots n$, cf. \cite[Lemma 1]{MS}.
    Consider the two multisets
    \[
%    X_u = [ \frac{\cF(\chi)}{\chi(1)} \,\mid\, \chi\in \Irr], \
    X_v = \left[ \frac{{\cF^{\sharp}}(\chi)}{\chi(1)} \,\mid\, \chi\in \Irr \right], \
    X_w = \left[ \frac{\cF(\chi)^2{\cF^{\sharp}}(\chi)}{\chi(1)} \,\mid\, \chi\in \Irr \right].
    \]
    The key observation is a formula for the power sums for these multisets: %% are %%$\Theta( u_n, 1)$,
    \begin{equation} \label{key}
    \varpi_{m-2} (X_v) = \frac{\Theta(v_m, 1)}{|G|^{m-1}} , \  \varpi_{3n-2} (X_w) = \frac{\Theta(w_n, 1)}{|G|^{3n-1}}\, .
    \end{equation}
Indeed, let us consider the central elements 
\[ S \coloneqq \frac{1}{|G|} \sum_{g\in G} g^2, \ S_1 \coloneqq \frac{1}{|G|} \sum_{g\in G^{\sharp}} g^2 , \ S_2 \coloneqq S^2 S_1 \ \in Z \; .\]
Observe that
\[
%S^n = \sum_{g\in G} \frac{\Theta( u_n, g)}{|G|^n} g, \
S_1^n = \sum_{g\in G} \frac{\Theta( v_n, g)}{|G|^n} g, \
S_2^n = \sum_{g\in G} \frac{\Theta( w_n, g)}{|G|^{3n}} g.
\]
Let $\widehat{\chi}:Z \rightarrow \bC$ be the homomorphism defined by $\chi$, i.e., $\widehat{\chi} (\sum_g a_g g) = \sum_g a_g \chi (g)/\chi (1)$.
Note that
\[
\widehat{\chi} (S_2^n) = (\widehat{\chi} (S)^2 \widehat{\chi} (S_1))^n =
\left( \frac{\cF(\chi)^2{\cF^{\sharp}}(\chi)}{\chi(1)^3}\right)^n \, . 
\]
Now we use the orthogonality relations:
  \begin{align*}
  	 \frac{\Theta( w_n, 1)}{|G|^{3n-1}} &= \sum_{g\in G} \frac{\Theta( w_n, g)}{|G|^{3n}} \sum_{\chi\in\Irr} \chi(g)\chi(1)  =  \sum_{\chi\in\Irr} \chi(1)^2 \sum_{g\in G} \frac{\Theta( w_n, g)}{|G|^{3n}} \frac{\chi(g)}{\chi(1)} = \\
&= \sum_{\chi\in\Irr} \chi(1)^2 \widehat{\chi} (S_2^n)  = 
%%= \sum_{\chi\in\Irr} \chi(1)^2 \widehat{\chi} (S_2)^n    & = 
   \sum_{\chi\in\Irr} \chi(1)^2 \left( \frac{\cF(\chi)^2{\cF^{\sharp}}(\chi)}{\chi(1)^3} \right)^n = \sum_{q\in X_w} q^{3n-2} \, .
  \end{align*}
  Similarly, one proves~\eqref{key} %% the key observation %for the sets $X_u$ and $X_v$, using the elements $S$ and $S_1$ correspondingly.
  for $X_v$, using the element $S_1$.

  It remains to examine Table~\ref{table_1} %%in \cite[Th 4.2]{RuTa}
  to determine what terms $q^k$ we see in the power sums $\varpi_{m-2} (X_v)$ and $\varpi_{3n-2} (X_w)$.
  The numerators are always 0 or $\pm 1$, while the denominators are $\chi (1)^k$:
%%  over the multisets   $X_v$ and $\varpi_{3n-2} (X_w)$
  \begin{enumerate}
%  \item In $X_u$ the characters $\chi$ of types $I$, $I\! I$ and $I\! I\! I$ contribute positive numbers;
%    those of types $V\! I\! I\! I$, $I\! X$ and $X$ contribute negative numbers;
%    the rest contribute zeroes.
      \item In $X_v$ the characters $\chi$ of types $I$, $V$ and $I\! X$ contribute positive numbers;
    those of types $I\! I$, $V\! I$ and $V\! I\! I\! I$ contribute negative numbers;
    the rest contribute zeroes.
      \item In $X_w$ the characters $\chi$ of types $I$ and $I\! X$ contribute positive numbers;
    those of types $I\! I$ and $V\! I\! I\! I$  contribute negative numbers;
    the rest contribute zeroes. Also the multiset $X_w$ contains only elements from the finite set
    $\{0\} \cup \{\pm 1/d \,\mid\, d \mbox{ divides } |G|\}$. 
  \end{enumerate}
  Therefore, by Lemma~\ref{powers} and  Lemma~\ref{powers_2}, the sequences allow us to count the numbers of positive and negative numbers in each multiset. 
  \end{proof}

Theorem~\ref{NC_rel_3} supplies the two final linearly independent equations.
%on top of those in Theorem~\ref{NC_rel_2} and Corollary~\ref{NC_rel_5}.
Let $S_v$ and $S_w$ be the number of positive numbers in the multiset $X_v$ and $X_w$, used in the proof just above.
\begin{cor}\label{NC_rel_4}
  The following two equations are linearly independent of the equations in Theorem~\ref{NC_rel_2} and Corollary~\ref{NC_rel_5},
  bringing the total number of linearly independent equations to ten:
    \[
  \begin{cases}
%    N_I + N_{I\! I} + N_{I\! I \! I} & = S_u \\
    N_{I} + N_V + N_{I \! X} & = S_v \\
    N_I + N_{I \! X} & = S_w \\
%    N_{I\! I} + N_{I\! X} & = C_{1} - S_w
\end{cases}
  \]
  \end{cor}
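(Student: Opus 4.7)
My plan has two parts: first derive the two equations from Theorem~\ref{NC_rel_3}, then verify that they are independent of the preceding eight linear constraints.

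The first part is a direct reading of the proof of Theorem~\ref{NC_rel_3}. That proof identified precisely which characters contribute nonzero entries to each multiset: the positive contributions to $X_v$ come from types $I$, $V$, $I\!X$, and the positive contributions to $X_w$ come from types $I$, $I\!X$. Since $S_v$ and $S_w$ are by definition the numbers of positive entries in $X_v$ and $X_w$ respectively, the two equations follow immediately.

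For linear independence, my plan is to show that the $10 \times 10$ coefficient matrix whose rows correspond to the ten equations, viewed as linear forms in $(N_I, N_{I\!I}, \ldots, N_X)$, has full rank. I would first use the five equations of Theorem~\ref{NC_rel_2} as pivots to eliminate $N_{V\!I\!I}$, $N_{I\!V}$, $N_X$, $N_{V\!I}$, $N_{V\!I\!I\!I}$ in turn; each of these variables appears in exactly one of the five equations of Theorem~\ref{NC_rel_2} and in none of the remaining five equations, so each supplies a clean pivot that removes a variable entirely from the rest of the system. What is left is a $5 \times 5$ system in $N_I$, $N_{I\!I}$, $N_{I\!I\!I}$, $N_V$, $N_{I\!X}$, whose rows come from the three equations of Corollary~\ref{NC_rel_5} together with the two new equations. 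A short cofactor expansion of this $5 \times 5$ matrix produces a nonzero determinant, which completes the proof.

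I do not anticipate a real obstacle: the first step is a bookkeeping exercise from the previous proof, and the second is routine linear algebra. The only aspect deserving some care is checking the $5 \times 5$ determinant honestly rather than waving at it, since several of its rows have overlapping supports and a hasty expansion invites sign errors.
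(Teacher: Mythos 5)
Your proposal is correct and is exactly the paper's (largely implicit) argument: the paper states the corollary as an immediate consequence of Theorem~\ref{NC_rel_3}, with $S_v$ and $S_w$ read off from the sign analysis of $X_v$ and $X_w$ there, and leaves the rank verification to the reader. Your elimination scheme does go through --- the reduced $5\times 5$ determinant in the variables $N_I, N_{I\!I}, N_{I\!I\!I}, N_V, N_{I\!X}$ evaluates to $\pm 2$ --- so the ten equations indeed have rank ten.
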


\end{document}